\theoremstyle{plain}
\newtheorem{theorem}{Theorem}[section]
\newtheorem{lemma}[theorem]{Lemma}
\theoremstyle{definition}
\theoremstyle{remark}
\newcommand{\bbZ}{\mathbb{Z}}
\newcommand{\bbR}{\mathbb{R}}
\newcommand{\bbC}{\mathbb{C}}
\newcommand{\bbQ}{\mathbb{Q}}
\newcommand{\bbN}{\mathbb{N}}
\DeclareMathOperator*{\supp}{supp}
\DeclareMathOperator*{\Imag}{Im}
\DeclareMathOperator*{\diag}{diag}
\title[]{Discrete $m$-functions with Doubly Palindromic Continued Fraction Coefficients}
\begin{document}
\maketitle

\begin{center}
\textbf{Hunter Handley and Brian Simanek\footnote{This author gratefully acknowledges support from the Simons Foundation through collaboration grant 707882.}}
\end{center}
\date{}

\begin{abstract}
We demonstrate that discrete $m$-functions with eventually periodic continued fraction coefficients have an algebraic relationship to their second solutions if and only if the periodic part of the sequence of continued fraction coefficients is doubly palindromic.  In this setting, doubly palindromic means that each sequence is a concatenation of two palindromes and a compatibility condition between the lengths of these palindromes is satisfied.
\end{abstract}

\vspace{4mm}

\footnotesize\noindent\textbf{Keywords:} Discrete $m$-functions, Continued Fractions, Transfer Matrices

\vspace{2mm}

\noindent\textbf{2020 Mathematics Subject Classification:} Primary 11A55; Secondary 42C05, 47B36

\vspace{2mm}

\normalsize

\section{Introduction}\label{intro}

Continued fractions are a ubiquitous tool in mathematics that has garnered significant attention in the last 100 years, especially since the breakthrough work of Ramanujan exposed their beauty and utility.  While the classical theory focused on continued fraction expansions of irrational real numbers, there is a parallel theory that concerns functions of a complex variable.  We will specifically consider the continued fraction expansions of discrete $m$-functions, which are Cauchy transforms of compactly supported probability measures on the real line, i.e.
\[
m(z)=\int\frac{d\rho(x)}{x-z},\qquad\qquad\supp(\rho)\subseteq[-t,t],\qquad\rho(\bbR)=1
\]
(see \cite[Theorem 2.3.6]{Rice}).  Such a function is known to have a continued fraction expansion that converges everywhere in $\bbC_+=\{z:\Imag[z]>0\}$.  In this case, the continued fraction coefficients (also called Jacobi parameters) come in the form of two bounded sequences $\{a_n\}_{n=1}^{\infty}$ and $\{b_n\}_{n=1}^{\infty}$, where each $a_n>0$ and each $b_n\in\bbR$.  With these coefficients, the continued fraction representation of $m(z)$ takes the following form:
\[
m(z)=\cfrac{1}{b_1-z-\cfrac{a_1^2}{b_2-z-\cfrac{a_2^2}{b_3-z-\cfrac{a_3^2}{b_4-z-a_4^2\cdots}}}}
\]
and this expression is valid for all $z\in\bbC_+$ (see \cite{Akh,Wall} or \cite[Equation 3.2.41]{Rice}),.  A converse to the above result follows from Favard's Theorem: if $\{a_n\}_{n=1}^{\infty}$ is a bounded sequence in $(0,\infty)$ and $\{b_n\}_{n=1}^{\infty}$ is a bounded sequence in $\bbR$, then there is a discrete $m$-function whose sequences of continued fraction coefficients are precisely these sequences.

It will be helpful for us to think of the sequences of continued fraction coefficients as a single sequence of pairs $\{(a_n,b_n)\}_{n=1}^{\infty}$.  We will be interested in the case when this sequence of pairs is eventually periodic.  By this we mean that we can express the sequence of coefficients in the following  form:
\begin{equation}\label{perpar}
\{(\alpha_1,\beta_1),(\alpha_2,\beta_2),\ldots,(\alpha_k,\beta_k),\overline{(a_1,b_1),\ldots,(a_p,b_p)}\},
\end{equation}
where the line over the last $p$ coefficients indicates that this string is infinitely repeated for the remainder of the sequence.  We will refer to the string $\{(\alpha_1,\beta_1),\ldots,(\alpha_k,\beta_k)\}$ as the non-periodic portion of the sequence and the rest of the sequence as the periodic portion of the sequence.  Notice that our division between the non-periodic portion of the sequence and the periodic portion of the sequence is somewhat arbitrary in that we can increase the first index of the periodic part if we so choose.  By making an appropriate choice for this division, we may arrange our notation so that
\begin{equation}\label{kp}
(\alpha_k,\beta_k)=(a_p,b_p),
\end{equation}
so \textit{we will always assume that this is the case}.

A straightforward calculation reveals that if $M$ is a discrete $m$-function with eventually periodic continued fraction coefficients, then $M$ is a quadratic irrationality, meaning there are polynomials $\alpha,\beta,\gamma\in\bbR[z]$ such that
\[
\alpha(z) M(z)^2+\beta(z) M(z)+\gamma(z)=0,\qquad\qquad z\in\bbC_+.
\]
By use of the quadratic formula, one can see that the quadratic equation
\[
\alpha(z) y^2+\beta(z) y+\gamma(z)=0
\]
also has a second solution that we will denote by $\widetilde{M}(z)$.  The function $\widetilde{M}(z)$ may not be a discrete $m$-function or have any simple relationship to a discrete $m$-function.

Much of what is known about the continued fraction expansions of discrete $m$-functions is analogous to what is known about continued fraction expansions of real numbers.  For example, the aforementioned property relating periodicity of the continued fraction coefficients to being the solution of a quadratic equation is the function theoretic analog of a result for real numbers known as Lagrange's Theorem (see \cite[Theorem 7.13]{Jungle}).  Our investigation is partly motivated by the recent paper \cite{DMS}, which provides a function theoretic analog of Serret's Theorem (concerning rational numbers whose continued fraction expansion is a finite palindrome).
We will be interested in the relationship between $M(z)$ and $\widetilde{M}(z)$ and to better understand this relationship, we look to the analogous result in the real number setting.  The following result was proven in \cite{Burger05}.

\begin{theorem}\label{burgers}
Suppose $\alpha\in\bbR\setminus\bbQ$ is a root of a degree two polynomial $P\in\bbZ[x]$ and let $\tilde{\alpha}$ be the other solution of $P$.  Then there exist $A,B,C,D\in\bbZ$ with $|AD-BC|=1$ so that
\[
\frac{1}{\tilde{\alpha}}=\frac{A\alpha+B}{C\alpha+D}
\]
if and only if the (eventually periodic) continued fraction coefficients of $\alpha$ have periodic part that can be written as the concatenation of two palindromes.
\end{theorem}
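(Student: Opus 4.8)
The plan is to recast the relation $\tfrac{1}{\tilde\alpha}=\tfrac{A\alpha+B}{C\alpha+D}$ with $|AD-BC|=1$ as the assertion that $1/\tilde\alpha$ and $\alpha$ lie in a common orbit for the M\"obius action of $\mathrm{GL}_2(\bbZ)$ on $\bbR$, and then to settle this question at the level of continued fractions. Two observations reduce the problem to its essential combinatorial core. First, both conditions appearing in the theorem are insensitive to replacing $\alpha$ by any number in its $\mathrm{GL}_2(\bbZ)$-orbit: Galois conjugation commutes with the $\mathrm{GL}_2(\bbZ)$-action, so that $\widetilde{g\alpha}=g\tilde\alpha$, and a one-line manipulation then exhibits $1/\widetilde{g\alpha}$ as a $\mathrm{GL}_2(\bbZ)$-transform of $g\alpha$; while passing to an equivalent $\alpha$ only cyclically rotates the periodic part of the continued fraction, and since the index at which one declares the periodic part to begin is (as already noted) arbitrary, the palindrome condition is genuinely a condition on the period viewed as a \emph{cyclic} word. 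Using Lagrange's theorem, I would therefore assume that the continued fraction of $\alpha$ is \emph{purely} periodic, $\alpha=[\overline{a_1,\dots,a_p}]$, so that $\alpha>1$, $-1<\tilde\alpha<0$, and $\alpha,\tilde\alpha$ are precisely the two fixed points of the M\"obius transformation of the matrix $T=\bigl(\begin{smallmatrix}a_1&1\\1&0\end{smallmatrix}\bigr)\cdots\bigl(\begin{smallmatrix}a_p&1\\1&0\end{smallmatrix}\bigr)$.

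Next I would bring in two classical facts about continued fractions of quadratic irrationals (see, e.g., \cite{Jungle}). Galois's theorem gives $-1/\tilde\alpha=[\overline{a_p,a_{p-1},\dots,a_1}]$, i.e. the negative reciprocal of the conjugate is purely periodic with the \emph{reversed} period; equivalently, $-1/\tilde\alpha$ is a fixed point of $T^{\!\top}$, the matrix of the reversed period. Serret's theorem states that two irrational numbers are $\mathrm{SL}_2(\bbZ)$-equivalent if and only if their continued fractions share a tail, which for purely periodic numbers means their periods agree up to cyclic rotation. To these I would add the elementary remark that a purely periodic $\gamma$ has $1/\gamma=[0;\overline{\cdots}]$ with the same tail as $\gamma$, so $1/\gamma$ is already $\mathrm{SL}_2(\bbZ)$-equivalent to $\gamma$; factoring an arbitrary element of $\mathrm{GL}_2(\bbZ)$ as an element of $\mathrm{SL}_2(\bbZ)$ times either the identity or $\bigl(\begin{smallmatrix}0&1\\1&0\end{smallmatrix}\bigr)$, this promotes to the statement that $\mathrm{GL}_2(\bbZ)$-equivalence to a purely periodic number coincides with $\mathrm{SL}_2(\bbZ)$-equivalence to it. Since $1/\tilde\alpha=\bigl(\begin{smallmatrix}-1&0\\0&1\end{smallmatrix}\bigr)\cdot(-1/\tilde\alpha)$, chaining these facts shows that the relation in the theorem holds for some admissible integers if and only if $[\overline{a_p,\dots,a_1}]$ and $[\overline{a_1,\dots,a_p}]$ are $\mathrm{SL}_2(\bbZ)$-equivalent, i.e. if and only if the period $a_1\cdots a_p$ equals its reversal $a_p\cdots a_1$ as a cyclic word.

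The last ingredient is a purely combinatorial lemma: a finite word over $\bbN$ is conjugate (cyclically equivalent) to its reversal if and only if some cyclic rotation of it is a concatenation of two palindromes. One direction is immediate, since $\widetilde{P_1P_2}=\widetilde{P_2}\,\widetilde{P_1}=P_2P_1$ is a cyclic rotation of $P_1P_2$ whenever $P_1,P_2$ are palindromes. For the converse, a conjugacy between $w=a_1\cdots a_p$ and $\widetilde{w}$ translates into a reflection symmetry $a_j=a_{c-j}$ of the cyclic word (indices mod $p$) for a fixed constant $c$; one then checks that, for a suitable starting index $s$, the rotation $a_{s+1}a_{s+2}\cdots a_{s+p}$ cut after $\ell:=(c-2s-1)\bmod p$ letters has both pieces palindromic, and that $s$ can be chosen with $1\le\ell\le p-1$ (a brief parity argument, with only the length-$2$ case needing a trivial separate check). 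Reading the resulting chain of equivalences in both directions completes the proof: the existence of suitable $A,B,C,D$ forces the period to be a cyclic rotation of its reversal, hence — by the lemma and the freedom in the starting index — a concatenation of two palindromes; conversely, such a decomposition yields the conjugacy, hence $\mathrm{SL}_2(\bbZ)$- and a fortiori $\mathrm{GL}_2(\bbZ)$-equivalence, hence the desired integers.

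I expect two points to demand the most care. The first is the bookkeeping between $\mathrm{SL}_2(\bbZ)$ and $\mathrm{GL}_2(\bbZ)$ — in particular keeping straight how the determinant $-1$ elements and the normalization $z\mapsto 1/z$ interact with purely periodic expansions — since this is exactly where the hypothesis $|AD-BC|=1$, as opposed to $AD-BC=1$, is used. The second is the converse half of the combinatorial lemma: producing a cut point that works uniformly across all residues and parities of $p$, and pinning down the (genuine, not merely cosmetic) difference between ``the periodic part is a concatenation of two palindromes'' and ``some cyclic rotation of the period is'', which is the reason the statement is most naturally phrased as a condition on the period as a cyclic word.
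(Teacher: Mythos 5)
First, a point of comparison: the paper offers no proof of Theorem \ref{burgers} at all --- it is quoted from \cite{Burger05} purely as motivation --- so your argument can only be judged on its own terms. Its architecture is the standard one for this result and is essentially sound: reduce to purely periodic $\alpha$ using the invariance of both sides of the asserted equivalence under the $\mathrm{GL}_2(\bbZ)$-action, apply Galois's theorem to identify $-1/\tilde\alpha$ with $[\overline{a_p,\ldots,a_1}]$, apply Serret's theorem to convert the M\"obius relation into the statement that the period and its reversal are conjugate as cyclic words, and finish with the combinatorial lemma. The lemma's converse is in fact easier than you anticipate: writing $w^R$ for the reversal of $w$, a conjugacy means $w=uv$ and $w^R=vu$ for some factorization, whence $vu=w^R=v^Ru^R$, and comparing blocks of equal length forces $v=v^R$ and $u=u^R$ simultaneously --- the cut point is handed to you by the conjugacy itself, and the only residual care is the empty-palindrome degeneracy, handled by rotating or doubling the period exactly as in the paper's Example.

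The one genuine error is the sentence promoting $\mathrm{SL}_2(\bbZ)$-equivalence to $\mathrm{GL}_2(\bbZ)$-equivalence: it is \emph{not} true that $\mathrm{GL}_2(\bbZ)$-equivalence to a purely periodic number coincides with $\mathrm{SL}_2(\bbZ)$-equivalence to it, nor is Serret's theorem an $\mathrm{SL}_2(\bbZ)$ statement. Shifting a continued fraction by one place is a determinant $-1$ operation, so two numbers whose expansions agree from positions $m$ and $n$ onward are related by a matrix of determinant $(-1)^{m+n}$, and when the period is even this parity cannot be adjusted. Concretely, $\gamma=[\overline{1,2}]=(1+\sqrt{3})/2$ generates the order $\bbZ[\sqrt{3}]$, which has no unit of norm $-1$; hence the stabilizer of $\gamma$ lies in $\mathrm{SL}_2(\bbZ)$, every matrix carrying $\gamma$ to $1/\gamma$ has determinant $-1$, and $1/\gamma$ is $\mathrm{GL}_2(\bbZ)$- but not $\mathrm{SL}_2(\bbZ)$-equivalent to $\gamma$. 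Fortunately the detour is unnecessary: the hypothesis $|AD-BC|=1$ is precisely $\mathrm{GL}_2(\bbZ)$-equivalence, and the classical form of Serret's theorem (eventually identical expansions if and only if equivalence under some $|ad-bc|=1$) is already a $\mathrm{GL}_2(\bbZ)$ statement. Deleting every mention of $\mathrm{SL}_2(\bbZ)$ and running the chain ``$1/\tilde\alpha\sim_{\mathrm{GL}_2}\alpha$ iff $-1/\tilde\alpha\sim_{\mathrm{GL}_2}\alpha$ iff the period is a cyclic rotation of its reversal iff the period is a concatenation of two palindromes'' yields a correct proof.
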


Our main theorem will be an analog of Theorem \ref{burgers} for discrete $m$-functions.  To properly state it, we must determine the proper condition on the sequences of continued fraction coefficients.  It will be necessary that each sequence has periodic part that can be expressed as a concatenation of two palindromes, but an additional compatibility condition on these sequences will also be required.

Consider two periodic sequences $\{a_j\}_{j=1}^{\infty}$ and $\{b_j\}_{j=1}^{\infty}$, which satisfy
\begin{equation}\label{perp}
a_{j+p}=a_j,\qquad b_{j+p}=b_j,\qquad\qquad j\in\bbN.
\end{equation}
We will say that the sequence $\{(a_j,b_j)\}_{j=1}^{\infty}$ is \textit{doubly palindromic} if there exist $\ell\in\bbN$ such that $\{a_1,\ldots,a_p\}$ is the concatenation of two palindromes with the first having length $\ell$ and the second having length $p-\ell$ and $\{b_1,\ldots,b_p\}$ is the concatenation of two palindromes with the first having length $\ell+1$ and the second having length $p-\ell-1$.  If necessary, we will describe such a sequence as being \textit{doubly palindromic with period $p$ and first length $\ell$}.

In order to ensure that each sequence is in fact the concatenation of two palindromes, we will need to assume that $\ell<p-1$.  We suffer no loss of generality in making this assumption because we can always interpret a $p$-periodic sequence as a $2p$-periodic sequence and this reinterpretation preserves the property of being doubly palindromic, as we see in the following example.

\medskip

\noindent\textbf{Example.} Consider the sequences formed by repeating the strings
\[
a_1a_2a_2a_1\tilde{a}_1\qquad\mbox{and}\qquad b_1b_2b_3b_2b_1,
\]
which would be doubly palindromic with $p=5$ and $\ell=4$ if we allowed for palindromes of length zero in our definition of doubly palindromic.  However, if we instead consider the strings of length $10$:
\[
a_1a_2a_2a_1\tilde{a}_1a_1a_2a_2a_1\tilde{a}_1\qquad\mbox{and}\qquad b_1b_2b_3b_2b_1b_1b_2b_3b_2b_1,
\]
Then this pair is doubly palindromic with $p=10$ and $\ell=4$.

\medskip

From the above example, we see how we can avoid setting $\ell=p-1$ in our definition of doubly palindromic.  We have already stated that we will always assume that the periodic portion of the sequence \eqref{perpar} will be chosen so that the last non-periodic coefficients are equal to the last coefficients of one period of the periodic coefficients.  This can easily be arranged by adding one full period of the periodic portion of the coefficients to the non-periodic coefficients.  Thus, for a doubly palindromic sequence satisfying \eqref{perp}, we may assume that $p$ is such that each of $\{a_1,\ldots,a_p\}$ and $\{b_1,\ldots,b_p\}$ is a concatenation of two non-empty palindromes and that $(\alpha_k,\beta_k)=(a_p,b_p)$ in \eqref{perpar}.

The statement of our main theorem will also make use of orthogonal polynomials.  While there are many interpretations of such polynomials, we will think of them as a sequence of polynomials generated using the continued fraction coefficients $\{a_n\}_{n=1}^{\infty}$ and $\{b_n\}_{n=1}^{\infty}$.  Indeed, for any such sequences of coefficients, we will set $p_0(z)=1$ and then define $p_{n+1}(z)$ from $\{p_j(z)\}_{j=0}^n$ by the formula
\[
zp_n(z)=a_{n+1}p_{n+1}(z)+b_{n+1}p_n(z)+a_{n}p_{n-1}(z),\qquad\qquad n\in\bbN.
\]
(we set $p_{-1}=0$).  For us, it will be important to note the dependence of $p_n(z)$ on the sequences $\{a_n\}_{n=1}^{\infty}$ and $\{b_n\}_{n=1}^{\infty}$.  Thus, we will often write $p_n(z;\{a_j,b_j\}_{j=1}^n)$ for the degree $n$ polynomial defined as above.  We will also use the notation $q_n$ to denote the second kind polynomial, which is given by
\[
q_n(z;\{a_j,b_j\}_{j=1}^n)=\frac{1}{a_1}p_{n-1}(z;\{a_j,b_j\}_{j=2}^n),\qquad n\in\bbN
\]
and $q_0(z)=0$ and $q_{-1}(z)=-1$ (see \cite[Theorem 3.2.2]{Rice}).

The last piece of notation that we will need involves fractional linear transformations.  For a matrix
\[
T=\begin{pmatrix} a & b\\ c & d\end{pmatrix}
\]
we define
\[
f_T(z)=\frac{az+b}{cz+d}.
\]
With this notation, it is straightforward to check that $f_A(f_B(z))=f_{AB}(z)$.

%\subsection{Main Results}\label{main}
Now we are ready to state our main result.

\begin{theorem}\label{mainthm}
Suppose $M$ is a discrete $m$-function with eventually periodic continued fraction coefficients as in \eqref{perpar} and satisfying \eqref{kp}.  Let $\widetilde{M}$ be the corresponding second solution.  The periodic part of the sequence \eqref{perpar} is doubly palindromic with period $p$ and first length $\ell$ if and only if
\begin{equation}\label{maineq}
\frac{1}{\alpha_k^2\widetilde{M}(z)}
=f_{T_3T_2T_1}(M(z)),
\end{equation}
where (with $\alpha_0=\alpha_k$)
\begin{align*}
T_1&=\begin{pmatrix}
p_k(z;\{\alpha_j,\beta_j\}_{j=1}^k) & q_k(z;\{\alpha_j,\beta_j\}_{j=1}^k)\\
-\alpha_kp_{k-1}(z;\{\alpha_j,\beta_j\}_{j=1}^k) & -\alpha_kq_{k-1}(z;\{\alpha_j,\beta_j\}_{j=1}^k)
\end{pmatrix}\\
T_2&=\begin{pmatrix}
p_{\ell+1}(z;\{a_j,b_j\}_{j=1}^{\ell+1}) & q_{\ell+1}(z;\{a_j,b_j\}_{j=1}^{\ell+1})\\
-a_{\ell+1}p_{\ell}(z;\{a_j,b_j\}_{j=1}^{\ell+1}) & -a_{\ell+1}q_{\ell}(z;\{a_j,b_j\}_{j=1}^{\ell+1})
\end{pmatrix}\\
T_3&=\begin{pmatrix}
p_{k}(z;\{\alpha_{k-j},\beta_{k-j+1}\}_{j=1}^{k}) & q_{k}(z;\{\alpha_{k-j},\beta_{k-j+1}\}_{j=1}^{k})\\
-\alpha_{k}p_{k-1}(z;\{\alpha_{k-j},\beta_{k-j+1}\}_{j=1}^{k}) & -\alpha_{k}q_{k-1}(z;\{\alpha_{k-j},\beta_{k-j+1}\}_{j=1}^{k})
\end{pmatrix}
\end{align*}
\end{theorem}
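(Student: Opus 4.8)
The plan is to strip off the non-periodic part of the coefficient sequence, reduce \eqref{maineq} to an identity in the purely periodic tail of $M$, and then recognize that identity as a symmetry of the one-period transfer matrix which is present exactly when the periodic block is a concatenation of two palindromes of the prescribed lengths.

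Write $M^{(j)}$ for the discrete $m$-function with Jacobi parameters $\{(a_{j+n},b_{j+n})\}_{n=1}^{\infty}$, so $M^{(0)}=M$ and $M^{(k)}$ is purely periodic. The standard formula expressing an $m$-function through that of its once-stripped coefficients (see \cite{Rice}), applied $k$ times, gives $M^{(k)}(z)=f_{T_1}(M(z))$ with $T_1$ exactly the matrix in the theorem and $\det T_1=1$ by a Wronskian computation. As $f_{T_1}$ is fractional linear, it carries the quadratic equation satisfied by $M$ onto the one satisfied by $M^{(k)}$ and so sends $\widetilde M$ to $\widetilde{M^{(k)}}$ (using that $M$ is a genuine quadratic irrationality, so the two roots are distinct). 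I would then prove a reversal lemma: with the convention $\alpha_0=\alpha_k$ and $D=\bigl(\begin{smallmatrix}0&1\\ \alpha_k^2&0\end{smallmatrix}\bigr)$, reversing a finite Jacobi block --- reversing the $b$'s, and reversing the $a$'s with the one-step shift visible in the indices of $T_3$, a shift forced by the fact that a block of $m$ sites carries only $m-1$ internal bonds --- replaces its transfer matrix $T$ by $D^{-1}T^{-1}D$. Applied to the non-periodic block this yields $T_1DT_3=D$, so that after cancellation \eqref{maineq} becomes equivalent to
\[
\widetilde{M^{(k)}}(z)=f_{DT_2}\bigl(M^{(k)}(z)\bigr),
\]
an identity in the periodic data alone, where $T_2$ is the transfer matrix of the first $\ell+1$ periodic steps.

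Let $T_{\mathrm{per}}$ denote the one-period transfer matrix. Periodicity ($M^{(k)}=M^{(k+p)}$) makes $M^{(k)}$ and $\widetilde{M^{(k)}}$ the two fixed points of $f_{T_{\mathrm{per}}}$, i.e., the two eigendirections of $T_{\mathrm{per}}$; off a discrete subset of the upper half-plane its eigenvalues $\lambda,\lambda^{-1}$ are distinct, and since every identity in play is rational in $z$ it suffices to work there. Factor $T_{\mathrm{per}}=V\,T_2$ with $V$ the transfer matrix of the remaining $p-\ell-1$ periodic steps; the boundary bond of $V$ is $a_p=\alpha_k$, so the reversal lemma attaches the same $D$ to $V$. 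For the forward implication I would feed the two palindrome hypotheses into the reversal lemma: the first gives $DT_2=T_2^{-1}D$ (a palindrome equals its own reversal, and $a_{\ell+1}=a_p$ is part of the second palindrome, matching the boundary bonds), the second gives $DV=V^{-1}D$, and multiplying these out yields $(DT_2)^{-1}T_{\mathrm{per}}(DT_2)=T_{\mathrm{per}}^{-1}$, so $DT_2$ interchanges the two eigendirections of $T_{\mathrm{per}}$ --- which is the displayed identity. Conversely $DT_2=T_2^{-1}D$ is, by the reversal lemma, equivalent to $\{a_1,\dots,a_\ell\}$ and $\{b_1,\dots,b_{\ell+1}\}$ being palindromes of those lengths, and the symmetry forced on $V$ to $\{a_{\ell+1},\dots,a_p\}$ and $\{b_{\ell+2},\dots,b_p\}$ being palindromes; the built-in offset between the $a$- and $b$-lengths reproduces the compatibility condition. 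The hypothesis $\ell<p-1$ (equivalently, the passage to period $2p$ in the Example) keeps both blocks nonempty, so that $T_2$ and $V$ are genuine transfer matrices.

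I expect the reverse implication to be the main obstacle. The displayed identity alone only says that $DT_2$ sends one eigendirection of $T_{\mathrm{per}}$ to the other; upgrading this to the statement that $DT_2$ conjugates $T_{\mathrm{per}}$ to a scalar multiple of $T_{\mathrm{per}}^{-1}$, and from there to the palindrome relations $a_i=a_{\ell+1-i}$, $b_i=b_{\ell+2-i}$ (and the analogues for the second block), requires expanding everything into polynomial identities in $z$ --- using that the coefficients of the defining quadratic of $M^{(k)}$ are entries of $T_{\mathrm{per}}$ --- and matching coefficients to recover the recurrence coefficients one layer at a time. The delicate points are showing that the split is forced to occur at position $\ell$ and not at some other position compatible with the same monodromy, and controlling the degenerate values of $z$ and degenerate coefficient configurations, for which one again uses that $M$ is a quadratic irrationality so that $M^{(k)}\neq\widetilde{M^{(k)}}$ and the eigendirection argument is reversible.
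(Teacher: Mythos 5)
Your forward direction is workable and is close in spirit to the paper's: both pass to the purely periodic tail via $m=f_{T_1}(M)$, and your ``reversal lemma'' is exactly the polynomial reversal identity the paper imports from \cite[Lemma 5.2.3]{Rice}, so the relations $T_3\propto DT_1^{-1}D$ and $(DT_2)^{-1}T_{\mathrm{per}}(DT_2)=T_{\mathrm{per}}^{-1}$ do hold up to scalar factors, which fractional linear maps ignore; the conjugation computation then correctly shows that $f_{DT_2}$ swaps the two fixed points of $f_{T_{\mathrm{per}}}$ away from the discrete set where the eigenvalues coincide. The genuine gap is in the converse, and you have in effect flagged it yourself: from the single functional identity ``$f_{DT_2}$ carries one eigendirection of $T_{\mathrm{per}}$ to the other'' you propose to recover the individual palindrome relations by expanding into polynomial identities in $z$ and matching coefficients ``one layer at a time,'' but this is never carried out, and it is not clear it can be without further input. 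As written, the reverse implication is a plan, not an argument.

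The missing ingredient is \cite[Theorem 5.2.2]{Rice}: for a purely $p$-periodic discrete $m$-function $m$, the second solution is $\tilde m=(a_p^2m^-)^{-1}$, where $m^-$ is itself a discrete $m$-function, namely the one whose period is the reversed string $\{(a_{p-1},b_p),(a_{p-2},b_{p-1}),\ldots,(a_1,b_2),(a_p,b_1)\}$. With this identification your reduced identity $\widetilde{M^{(k)}}=f_{DT_2}(M^{(k)})$ becomes simply $m^-=f_{T_2}(m)$; since $T_2$ is the conjugated transfer matrix built from the first $\ell+1$ Jacobi parameters of $m$, coefficient stripping gives $f_{T_2}(m)=m_{\ell+1}$, hence $m^-=m_{\ell+1}$, and equating the two explicitly known $p$-periodic Jacobi parameter sequences term by term \emph{is} the doubly palindromic condition with first length $\ell$. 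This also dissolves your worry about the split occurring at some other position: the position is dictated by the $T_2$ appearing in the hypothesis, not deduced from the monodromy. The paper closes both directions this way --- its converse needs only the observation that \eqref{step1} and \eqref{step3} hold with no palindromic assumption, so that \eqref{maineq} collapses to $m^-=f_{T_2}(m)$, after which uniqueness of Jacobi parameters (in the style of Lemma \ref{strips}, by comparing asymptotics at infinity) finishes. Your eigendirection argument is a legitimate alternative for the forward direction, but for the converse you should replace the coefficient-matching plan with this identification of $\widetilde{M^{(k)}}$ as the reciprocal of a genuine (reversed-period) discrete $m$-function.
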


\noindent\textit{Remark.} Each of the matrices $T_j$ for $j=1,2,3$ is a transfer matrix (see \cite[Equation 3.2.19]{Rice}) conjugated by the $2\times2$ matrix $\diag\{1,-1\}$.  We will call such matrices \textit{conjugated transfer matrices}.  From the definition of transfer matrix, it follows that the product $T_3T_2T_1$ is also a conjugated transfer matrix.

\medskip

The next section is devoted to the proof of Theorem \ref{mainthm}. A natural question that follows from Theorem \ref{mainthm} is if one can prove a similar result with the roles of $M$ and $\widetilde{M}$ reversed.  In the case of irrational numbers, the answer to that question follows easily from Theorem \ref{burgers}.  However, in the case of discrete $m$-functions, we will see in Section \ref{further} that the situation is not symmetric and the reverse relationship between $M$ and $\widetilde{M}$ only occurs if the continued fraction coefficients of $M$ are purely periodic.  %In Section \ref{examples} we will consider an illustrative example.

%\section{Doubly Palindromic Periodic Sequences}\label{DPPS}

\section{Proof of Theorem \ref{mainthm}}\label{mainproof}

In this proof, we will make use of a technique called coefficient stripping.  If $s$ is a discrete $m$-function with continued fraction coefficients $\{(a_n',b_n')\}_{n=1}^{\infty}$, then the sequence $\{(a_n',b_n')\}_{n=L+1}^{\infty}$ is the sequence of continued fraction coefficients for a different discrete $m$-function, which we denote by $s_L$.  In this case, it holds that
\[
s_L(z)=\frac{p_L(z;\{a_n',b_n'\}_{n=1}^L)s(z)+q_L(z;\{a_n',b_n'\}_{n=1}^L)}{-a_L'(p_{L-1}(z;\{a_n',b_n'\}_{n=1}^{L-1})s(z)+q_{L-1}(z;\{a_n',b_n'\}_{n=1}^{L-1}))}
\]
(see \cite[Eqns. 3.2.23 and 3.7.23]{Rice}).

Let the continued fraction coefficients for $M$ be given by \eqref{perpar} with $(\alpha_k,\beta_k)=(a_p,b_p)$ and with $\{(a_j,b_j)\}_{j=1}^{\infty}$ doubly palindromic with first length $\ell\in\{1,2,\ldots,p-2\}$.  Let $m$ be the discrete $m$-function with purely periodic continued fraction coefficients $\{(a_j,b_j)\}_{j=1}^{\infty}$.  Then $m=M_k$ and so the matrix $T_1$ satisfies
\begin{equation}\label{step1}
m(z)=f_{T_1}(M(z)).
\end{equation}

Now let $m^{-}(z)$ be the discrete $m$-function with purely periodic continued fraction coefficients with one period given by
\[
\{(a_{p-1},b_p),(a_{p-2},b_{p-1}),\ldots,(a_1,b_2),(a_p,b_1)\}
\]
Write
\begin{align*}
a_1a_2\cdots a_p&=\overbrace{a_{1,1}a_{1,2}\cdots a_{1,2}a_{1,1}}^{\ell}a_{2,1}a_{2,2}\cdots a_{2,2}a_{2,1}\\
b_1b_2\cdots b_p&=\underbrace{b_{1,1}b_{1,2}\cdots b_{1,2}b_{1,1}}_{\ell+1}b_{2,1}b_{2,2}\cdots b_{2,2}b_{2,1}
\end{align*}
Then the sequences $\{a_j\}_{j=\ell+2}^{\infty}$ and $\{b_j\}_{j=\ell+2}^{\infty}$ begin
\[
a_{2,2}a_{2,3}\cdots a_{2,3}a_{2,2}a_{2,1}a_{1,1}\cdots a_{1,1}\cdots
\]
and
\[
b_{2,1}b_{2,2}\cdots b_{2,2}b_{2,1}b_{1,1}b_{1,2}\cdots b_{1,2}b_{1,1}\cdots
\]
respectively.  These coincide with the $p$-periodic sequences with one period given by
\[
\{a_{p-1},\ldots a_1,a_p\}\qquad \mbox{and}\qquad \{b_p,b_{p-1}\ldots,b_1\}
\]
respectively and these are the sequences of continued fraction coefficients for the discrete $m$-function $m^{-}$.  Therefore, $m^-=m_{\ell+1}$ and hence the matrix $T_2$ satisfies
\begin{equation}\label{step2}
m^-(z)=f_{T_2}(m(z))
\end{equation}

From Equation \eqref{step1}, we know that
\[
m(z)=\frac{p_k(z)M(z)+q_k(z)}{-\alpha_k(p_{k-1}(z)M(z)+q_{k-1}(z))},
\]
where $p_k(z)=p_k(z;\{\alpha_j,\beta_j\}_{j=1}^k)$ and similarly for $p_{k-1},q_k$, and $q_{k-1}$.  This shows that if $\widetilde{M}(z)$ is the second solution corresponding to $m(z)$, then
\[
\tilde{m}(z)=\frac{p_k(z)\widetilde{M}(z)+q_k(z)}{-\alpha_k(p_{k-1}(z)\widetilde{M}(z)+q_{k-1}(z))}=\frac{p_k(z)+q_k(z)\frac{1}{\widetilde{M}(z)}}{-\alpha_k(p_{k-1}(z)+q_{k-1}(z)\frac{1}{\widetilde{M}(z)})}
\]
Solving for $1/\widetilde{M}(z)$ gives
\[
\frac{1}{\widetilde{M}(z)}=\frac{p_k(z)+\tilde{m}(z)\alpha_kp_{k-1}(z)}{-(q_k(z)+\alpha_kq_{k-1}(z)\tilde{m}(z))}
\]
Now we will let $p_k^-(z)=p_k(z;\{\alpha_{k-j},\beta_{k-j+1}\}_{j=1}^k)$ (with $\alpha_0=\alpha_k$) and similarly for $p_{k-1}^-,q_k^-$, and $q_{k-1}^-$.  We invoke \cite[Lemma 5.2.3]{Rice} to rewrite the above as
\[
\frac{1}{\widetilde{M}(z)}=\frac{p_k^-(z)+\tilde{m}(z)\alpha_k^2q_{k}^-(z)}{-(\alpha_k^{-1}p_{k-1}^-(z)+\alpha_kq_{k-1}^-(z)\tilde{m}(z))}=\frac{p_k^-(z)+\tilde{m}(z)\alpha_k^2q_{k}^-(z)}{-\alpha_k(\alpha_k^{-2}p_{k-1}^-(z)+q_{k-1}^-(z)\tilde{m}(z))}
\]
Now we use \cite[Theorem 5.2.2]{Rice} to replace $\tilde{m}(z)$ by $(a_p^2m^-(z))^{-1}$ and use the fact that $a_p=\alpha_k$ to rewrite this as
\[
\frac{1}{\widetilde{M}(z)}=\frac{a_p^2p_k^-(z)m^-(z)+a_p^2q_{k}^-(z)}{-\alpha_k(p_{k-1}^-(z)m^-(z)+q_{k-1}^-(z))}
\]
and hence we see that the matrix $T_3$ satisfies
\begin{equation}\label{step3}
\frac{1}{\alpha_k^2\widetilde{M}(z)}=f_{T_3}(m^-(z))
\end{equation}
Combining Equations \eqref{step1}, \eqref{step2}, and \eqref{step3} proves this direction of Theorem \ref{mainthm}.

To prove the converse, assume that \eqref{maineq} holds for some $\ell\in\bbN$.  The first half of the proof shows that it will suffice to prove that $m^-=m_{\ell+1}$.  Notice that Equations \eqref{step1} and \eqref{step3} did not make use of the fact that the periodic portion of the continued fraction coefficients for $M$ is doubly palindromic (only Equation \eqref{step2} made use of this fact).  Therefore, when we write
\[
\frac{1}{\alpha_k^2\widetilde{M}(z)}=f_{T_3T_2T_1}(M(z))=f_{T_3}(f_{T_2}(f_{T_1}(M(z)))),
\]
we may replace
\[
f_{T_1}(M(z))\qquad\mbox{by}\qquad m(z)
\]
and we may replace
\[
\frac{1}{\alpha_k^2\widetilde{M}(z)}\qquad\mbox{by}\qquad f_{T_3}(m^-(z))
\]
Applying $f_{T_3^{-1}}$ to both sides (because $T_3$ has determinant $1$ (see \cite[Eqn. 3.2.22]{Rice})) shows
\[
m^-(z)=f_{T_2}(m(z)).
\]
Since the polynomials in $T_2$ are generated from the continued fraction coefficients that correspond to $m$, this last relation implies $m^-=m_{\ell+1}$ as desired.

\medskip

\noindent\textit{Remark.}  For future reference, we recall our observation that the relation $m^-=m_{\ell+1}$ is equivalent to the condition that the continued fraction coefficients of $m$ are doubly palindromic with first length $\ell$.  Since the continued fraction coefficients of $m^-$ and $m$ are both periodic with period $p$, this is also equivalent to $m=m^-_{p-\ell-1}$.

\section{Further Discussion}\label{further}

Given the relation in Theorem \ref{mainthm}, it is natural to wonder what happens if we reverse the role of $M$ and $\widetilde{M}$.  The implications of this switch are not obvious because only $M$ is assumed to be a discrete $m$-function.  If the continued fraction coefficients of $M$ are purely periodic, then $1/(a_p^2\widetilde{M})$ is a discrete $m$-function (which we denoted by $m^-$ in the previous section), but this is generally not the case.  Indeed, if retain the notation from the proof of Theorem \ref{mainthm} and set $k=1$, then we can write
\[
\widetilde{M}(z)=\frac{1}{\beta_1-z-\alpha_1^2\tilde{m}(z)}=\cfrac{1}{\beta_1-z-\cfrac{\alpha_1^2}{a_p^2m^-(z)}}
\]
At infinity, $m^-$ decays like $-1/z$ and hence $\widetilde{M}$ decays like $-[(1-\alpha_1^2/a_p^2)z]^{-1}$ at infinity.  This shows $1/(\alpha_1^2\widetilde{M})$ cannot be a discrete $m$-functions unless $\alpha_1=a_p$ and this choice of $\alpha_1$ would preserve periodicity.  This leads us to the following result.

\begin{theorem}\label{reverse}
Let $M$ be a discrete $m$-function with eventually periodic continued fraction coefficients as in \eqref{perpar} and satisfying \eqref{kp}.  Suppose there exists a sequence $\{(c_n,d_n)\}_{n=1}^{\infty}$ with each $c_n>0$ and each $d_n\in\bbR$ and an $L\in\bbN$ such that
\[
M(z)=\frac{p_{L}(z)\frac{1}{\alpha_k^2\widetilde{M}(z)}+q_{L}(z)}{-c_{L}(p_{L-1}(z)\frac{1}{\alpha_k^2\widetilde{M}(z)}+q_{L-1}(z))},
\]
where $p_{L}(z)=p_{L}(z;\{c_j,d_j\}_{j=1}^{L})$ and similarly for $q_{L}$, $p_{L-1}$, and $q_{L-1}$.  Then the continued fraction coefficients of $M$ are purely periodic and doubly palindromic.
\end{theorem}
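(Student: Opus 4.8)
My plan is to read the displayed hypothesis as a coefficient stripping identity and to play it against Equation \eqref{step3} from the proof of Theorem \ref{mainthm}; the crucial point is that \eqref{step3} (and \eqref{step1}) were established there \emph{without} assuming the periodic part is doubly palindromic. Keep the notation of Section \ref{mainproof}: let $m$ be the purely periodic discrete $m$-function with parameters $\overline{(a_1,b_1),\dots,(a_p,b_p)}$, and let $m^-$ be the purely periodic discrete $m$-function with one period $(a_{p-1},b_p),(a_{p-2},b_{p-1}),\dots,(a_1,b_2),(a_p,b_1)$.

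First I would combine the three relations. The hypothesis is $M=f_U(1/(\alpha_k^2\widetilde M))$, where $U$ is the conjugated transfer matrix of $(c_1,d_1),\dots,(c_L,d_L)$; Equation \eqref{step1} is $m=f_{T_1}(M)$ with $T_1$ the conjugated transfer matrix of $(\alpha_1,\beta_1),\dots,(\alpha_k,\beta_k)$; and Equation \eqref{step3} is $1/(\alpha_k^2\widetilde M)=f_{T_3}(m^-)$ with $T_3$ the conjugated transfer matrix of the (legitimate, as the $\alpha$'s are positive) Jacobi sequence $\sigma:=\big((\alpha_{k-1},\beta_k),(\alpha_{k-2},\beta_{k-1}),\dots,(\alpha_1,\beta_2),(\alpha_k,\beta_1)\big)$, where $\alpha_0=\alpha_k$. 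Composing, $m=f_{T_1UT_3}(m^-)$. Since a product of conjugated transfer matrices is the conjugated transfer matrix of the concatenated parameter string, $V:=T_1UT_3$ is the conjugated transfer matrix of $\sigma$, then $(c_1,d_1),\dots,(c_L,d_L)$, then $(\alpha_1,\beta_1),\dots,(\alpha_k,\beta_k)$; in particular $\det V=1$, so $V$ is invertible. By Favard's Theorem there is a discrete $m$-function with those $2k+L$ parameters prepended to the parameters of $m$, and the coefficient stripping formula identifies it with $f_{V^{-1}}(m)=m^-$. Hence $m^-$ has Jacobi parameters
\[
(\alpha_{k-1},\beta_k),\dots,(\alpha_1,\beta_2),(\alpha_k,\beta_1),\;(c_1,d_1),\dots,(c_L,d_L),\;(\alpha_1,\beta_1),\dots,(\alpha_k,\beta_k),\;(a_1,b_1),(a_2,b_2),\dots
\]
But $m^-$ also has, by construction, the purely periodic parameters $\overline{(a_{p-1},b_p),\dots,(a_1,b_2),(a_p,b_1)}$, and Jacobi parameters are unique, so these two sequences coincide.

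Now I would carry out the comparison, which I expect to be the only delicate step. Write $F$ for the $p$-periodic sequence with $F(n)=(a_{((n-1)\bmod p)+1},b_{((n-1)\bmod p)+1})$ and $R$ for the $p$-periodic parameter sequence of $m^-$. Matching the two descriptions of $m^-$ at positions exceeding $2k+L$ gives $R(n)=F(n-(2k+L))$ for all large $n$, hence for all $n\ge1$ by periodicity. Reading off the first description at positions $k+L+1,\dots,2k+L$ then yields $(\alpha_i,\beta_i)=R(k+L+i)=F(i-k)$ for $i=1,\dots,k$; since the parameters of $M$ at positions $n>k$ are already $F(n-k)$, the entire parameter sequence of $M$ equals $n\mapsto F(n-k)$, which is $p$-periodic. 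Thus $M$ is purely periodic (and, as a consistency check, $(\alpha_k,\beta_k)=F(0)=(a_p,b_p)$, in agreement with \eqref{kp}).

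Finally, with $M$ purely periodic we have $M=m$, $\widetilde M=\tilde m$, and $1/(\alpha_k^2\widetilde M)=1/(a_p^2\tilde m)=m^-$ by \cite[Theorem 5.2.2]{Rice}. The identity $R(n)=F(n-(2k+L))$ says exactly that $m=m^-_{\ell}$ with $\ell\equiv 2k+L\pmod p$, i.e. $m=m^-_{p-\ell'-1}$ for $\ell'=p-1-\big((2k+L)\bmod p\big)$; by the Remark at the end of Section \ref{mainproof} this is equivalent to the periodic part of \eqref{perpar} being doubly palindromic with first length $\ell'$, after reinterpreting the period as $2p$ if necessary so that $1\le\ell'<p-1$ (as in the Example). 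The conceptual crux is the availability of \eqref{step3} without the doubly palindromic hypothesis, which turns the hypothesis of Theorem \ref{reverse} into a conflict between two parameterizations of the single function $m^-$; the technical burden is the parameter-by-parameter comparison, including the wrap-around convention $\alpha_0=\alpha_k$ built into $\sigma$ and the cases $k<p$ versus $k\ge p$.
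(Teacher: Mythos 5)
Your argument is correct in substance and starts from the same two ingredients as the paper --- Equations \eqref{step1} and \eqref{step3} hold without any palindromicity assumption, so the hypothesis combines with them to give $m=f_{T_1UT_3}(m^-)$ (your $U$ is the paper's $T_4$) --- but from there it takes a genuinely different route. The paper applies its Lemma \ref{strips} (proved by comparing asymptotics at infinity) to get $m=m^-_N$, reads off double palindromicity from the Remark, and only then obtains pure periodicity by feeding this back into Theorem \ref{mainthm} to write $M=f_{T_4T_3T_2T_1}(M)$, iterating, and applying Lemma \ref{strips} a second time. You instead extract everything from a single comparison of the two parameterizations of $m^-$: your Favard-plus-invertibility argument is a legitimate substitute for Lemma \ref{strips}, and uniqueness of Jacobi parameters then forces the block $(\alpha_1,\beta_1),\dots,(\alpha_k,\beta_k)$ occupying positions $k+L+1,\dots,2k+L$ of the purely periodic sequence $R$ to equal $F(1-k),\dots,F(0)$, so the ``nonperiodic'' part of \eqref{perpar} was never nonperiodic. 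This is more direct and avoids invoking Theorem \ref{mainthm} altogether; what it costs is the careful bookkeeping of positions, which you carry out correctly. One blemish: the assertion ``$M=m$'' in your final paragraph is false in general, since $M$'s coefficients are $F(n-k)$, a cyclic shift of $m$'s coefficients $F(n)$, and these coincide only when $p\mid k$. Nothing downstream depends on it --- the doubly palindromic conclusion comes from $R(n)=F(n-(2k+L))$, i.e.\ $m=m^-_N$, exactly as in the paper, and you have shown $M$'s sequence is purely $p$-periodic with tail $\{(a_j,b_j)\}_{j\ge1}$ --- but you should delete that claim or replace it with the (true, one-line) observation that a cyclic shift of a doubly palindromic periodic pair is again doubly palindromic with a possibly different first length.
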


To prove Theorem \ref{reverse}, we will need the following lemma.

\begin{lemma}\label{strips}
Let $f$ and $g$ be discrete $m$-functions.  Suppose that there exist continued fraction parameters $\{(\gamma_n,\delta_n)\}_{n=1}^{\infty}$ and $s\in\bbN$ so that
\[
f(z)=\frac{p_{s}(z)g(z)+q_{s}(z)}{-\gamma_{s}(p_{s-1}(z)g(z)+q_{s-1}(z))},
\]
where $p_{s}(z)=p_{s}(z;\{\gamma_j,\delta_j\}_{j=1}^{s})$ and similarly for $q_{s}$, $p_{s-1}$, and $q_{s-1}$. Then the first $s$ continued fraction coefficients of $g$ are $\{(\gamma_j,\delta_j)\}_{j=1}^s$.
\end{lemma}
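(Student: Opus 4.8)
\emph{Proof proposal.} The plan is to build an auxiliary discrete $m$-function whose $s$-fold coefficient strip equals $f$ and whose first $s$ continued fraction coefficients are exactly $\{(\gamma_j,\delta_j)\}_{j=1}^s$, and then to identify this auxiliary function with $g$ by comparing two expressions for $f$.

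First I would form the sequence of pairs obtained by prepending $(\gamma_1,\delta_1),\ldots,(\gamma_s,\delta_s)$ to the sequence of continued fraction coefficients of $f$. Since $f$ is a discrete $m$-function, its continued fraction coefficients form bounded sequences with positive $a$-part, and prepending finitely many pairs with $\gamma_j>0$ and $\delta_j\in\bbR$ preserves boundedness and positivity. Hence Favard's theorem produces a discrete $m$-function $h$ having exactly this sequence as its continued fraction coefficients. By construction the coefficients of $h$ beyond index $s$ are precisely those of $f$, so coefficient stripping gives $h_s=f$, while the first $s$ coefficients of $h$ are $\{(\gamma_j,\delta_j)\}_{j=1}^s$.

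Next I would apply the coefficient stripping identity recalled at the start of Section~\ref{mainproof} to $h$ with $L=s$. Because the first $s$ coefficients of $h$ are $\{(\gamma_j,\delta_j)\}_{j=1}^s$, this gives $f(z)=h_s(z)=f_T(h(z))$, where $T$ is precisely the conjugated transfer matrix built from $\{(\gamma_j,\delta_j)\}_{j=1}^s$ that appears in the statement of the lemma. Comparing with the hypothesis $f(z)=f_T(g(z))$ (the same $T$), we obtain $f_T(h(z))=f_T(g(z))$ for all $z\in\bbC_+$.

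To finish, I would use that $T$ has determinant $1$ (the determinant of a conjugated transfer matrix; see \cite[Eqn.~3.2.22]{Rice}), in particular a nonzero constant independent of $z$, so that for each $z$ the Möbius transformation $f_{T(z)}$ is injective. This forces $h(z)=g(z)$ on $\bbC_+$, i.e.\ $h=g$ as discrete $m$-functions, and since the continued fraction coefficients of a discrete $m$-function are uniquely determined by the function, the first $s$ continued fraction coefficients of $g$ must be $\{(\gamma_j,\delta_j)\}_{j=1}^s$, as claimed. I expect the only step needing care to be this last one: one must observe that $\det T$ does not depend on $z$ (so the Möbius map never degenerates) and invoke uniqueness of the Jacobi parameters attached to an $m$-function; everything else is a direct application of Favard's theorem and the coefficient stripping formula.
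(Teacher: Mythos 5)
Your proposal is correct, but it takes a genuinely different route from the paper's proof. The paper argues directly on the continued fraction: it unfolds the hypothesis into an explicit finite continued fraction for $g$ terminating in $\gamma_s^2 f(z)$, writes the first step of $g$'s own expansion as $g=1/(y_1-z-x_1^2g_1)$, and compares asymptotics at infinity (both $f$ and $g_1$ are discrete $m$-functions, hence decay like $-1/z$) to force $x_1=\gamma_1$ and $y_1=\delta_1$, then proceeds inductively. You instead construct, via Favard's theorem, an auxiliary discrete $m$-function $h$ whose coefficient sequence is $(\gamma_1,\delta_1),\ldots,(\gamma_s,\delta_s)$ followed by the coefficients of $f$, observe that $h_s=f$ so the coefficient-stripping identity yields $f=f_T(h)$ with the same conjugated transfer matrix $T$ appearing in the hypothesis, and then cancel $f_T$ from $f_T(h)=f_T(g)$ using that $\det T\equiv 1$, concluding $h=g$ and hence the claim from uniqueness of the Jacobi parameters. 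Both arguments are sound. Yours is more structural: it leans on the bijection between bounded coefficient sequences and discrete $m$-functions (Favard plus uniqueness of the continued fraction coefficients) and on the nondegeneracy of the transfer matrix, and it avoids any asymptotic computation. The paper's argument is more self-contained in that the asymptotic comparison at infinity is essentially an on-the-spot proof of the very uniqueness statement you import as a known fact; so the one point you should make explicit is the citation for uniqueness of the Jacobi parameters attached to a discrete $m$-function (standard, since they are determined by the moments of the measure), after which your argument is complete.
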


\begin{proof}
Through algebraic manipulation, we see that we can rewrite the hypotheses of the lemma as
\[
g(z)=\cfrac{1}{\delta_1-z-\cfrac{\gamma_1^2}{\delta_2-z-\cfrac{\gamma_2^2}{\ddots-\cfrac{\gamma_{s-1}^2}{\delta_s-z-\gamma_s^2f(z)}}}}
\]
If $(x_1,y_1)$ is the first continued fraction coefficient for $g$, then we can also write
\[
g(z)=\cfrac{1}{y_1-z-x_1^2g_1(z)}
\]
(recalling the notation $g_1$ from the beginning of Section \ref{mainproof}).  Since $f$ and $g_1$ are both discrete $m$-functions, then by comparing asymptotics at infinity, we deduce that $x_1=\gamma_1$ and $y_1=\delta_1$.  Applying this same reasoning to $g_1$ and proceeding inductively yields the desired relation.
\end{proof}

\begin{proof}[Proof of Theorem \ref{reverse}]
Let
\[
T_4=\begin{pmatrix}
p_{L}(z;\{c_j,d_j\}_{j=1}^{L}) & q_{L}(z;\{c_j,d_j\}_{j=1}^{L})\\
-c_{L}p_{L-1}(z;\{c_j,d_j\}_{j=1}^{L}) & -c_{L}q_{L-1}(z;\{c_j,d_j\}_{j=1}^{L})
\end{pmatrix}
\]
so that our assumptions can be rewritten as $M=f_{T_4}((\alpha_k^2\widetilde{M})^{-1})$.  Using the notation from the proof of Theorem \ref{mainthm}, we may then conclude that
\[
m(z)=f_{T_1T_4T_3}(m^-(z)).
\]
The matrix $T_1T_4T_3$ is a conjugated transfer matrix (see the Remark after Theorem \ref{mainthm}) and so we may apply Lemma \ref{strips} to conclude that $m=m^-_N$ for some $N\in\bbN$.  We have already observed that this is equivalent to the condition that the sequence of continued fraction coefficients for $m$ is doubly palindromic (and hence that for $M$ is eventually doubly palindromic).  Thus, we may apply Theorem \ref{mainthm} to write $M=f_{T_4T_3T_2T_1}(M(z))$.  Now we may iterate this relation and apply Lemma \ref{strips} again to obtain our desired conclusion.
\end{proof}

%\bigskip

%\noindent\textbf{Acknowledgements.} It is a pleasure to thank Rob Milson for helpful discussion about the content of this work.

\vspace{7mm}

\end{document}